\documentclass{mfat}
\pagespan{361}{369}

\usepackage{mmap}
\usepackage[utf8]{inputenc}
\usepackage{graphicx} 



\theoremstyle{plain}
\newtheorem{thm}{Theorem}[section]
\newtheorem*{thm*}{Theorem}
\newtheorem{prop}{Proposition}[section]
\newtheorem*{prop*}{Proposition}
\newtheorem{cor}{Corollary}[section]
\newtheorem*{cor*}{Corollary}

\newtheorem*{lem*}{Lemma}
\theoremstyle{definition}

\newtheorem*{defn*}{Definition}

\newtheorem*{exmp*}{Example}

\newtheorem*{exmps*}{Examples}

\newtheorem*{rem*}{Remark}

\newtheorem*{rems*}{Remarks}

\newtheorem*{note*}{Note}
\newcommand{\N}{{\mathbb N}}
\newcommand{\Z}{{\mathbb Z}}
\newcommand{\R}{{\mathbb R}}
\newcommand{\C}{{\mathbb C}}



\begin{document}

\title[On the Carleman ultradifferentiable vectors]
    {On the Carleman ultradifferentiable vectors  of a scalar type
spectral operator}


\author{Marat V. Markin}
\address{Department of Mathematics, California State University, Fresno 5245 N. Backer Avenue,
M/S PB 108 Fresno, CA 93740-8001}
\email{mmarkin@csufresno.edu}

\subjclass[2010]{Primary 47B40; Secondary 47B15}
\date{04/06/2015}
\dedicatory{To Academician Yu. M. Berezansky in honor of his 90th jubilee}
\keywords{Scalar type spectral operator, normal operator, Carleman classes of vectors.}

\begin{abstract}
  A description of the Carleman classes of vectors, in particular the
  Gevrey classes, of a scalar type spectral operator in a reflexive
  complex Banach space is shown to remain true without the reflexivity
  requirement. A similar nature description of the entire vectors of
  exponential type, known for a normal operator in a complex Hilbert
  space, is generalized to the case of a scalar type spectral operator
  in a complex Banach space.
\end{abstract}

\maketitle

\begin{flushright}
	\rm{Never cut what you can untie}.\\ \it{Joseph\ Joubert}
\end{flushright}

\medskip

\section[Introduction]{Introduction}

The description of the \textit{Carleman classes} of ultradifferentiable vectors, in particular the \textit{Gevrey classes}, of a \textit{normal operator} in a complex Hilbert space in terms of its {\it spectral measure} established in \cite{GorV83} (see also \cite{Gor-Knyaz} and \cite{book}) is generalized in {\cite[Theorem 3.1]{Markin2004(2)}} to the case of a {\it scalar type spectral operator} in a complex {\it reflexive} Banach space.

Here, the reflexivity requirement is shown to be superfluous and a similar nature description of the entire vectors of exponential type, known for a normal operator in a complex Hilbert space (see, e.g., \cite{Gor-Knyaz}), is generalized to the case of a scalar type spectral operator in a complex Banach space.

\section[Preliminaries]{Preliminaries}

For the reader's convenience, we shall outline in this section certain essential preliminaries.

\subsection{Scalar type spectral operators}

Henceforth, unless specified otherwise, $A$ is supposed to be a {\it scalar type spectral operator} in a complex Banach space $(X,\|\cdot\|)$ and $E_A(\cdot)$ to be its {\it spectral measure} (the {\it resolution of the identity}), the operator's \textit{spectrum} $\sigma(A)$ being the {\it support} for the latter \cite{Survey58,Dun-SchIII}.

In a complex Hilbert space, the scalar type spectral operators are precisely those similar to the {\it normal} ones \cite{Wermer}.

A scalar type spectral operator in complex Banach space has an {\it operational calculus} analogous to that of a \textit{normal operator} in a complex Hilbert space \cite{Survey58,Dun-SchII,Dun-SchIII}. To any Borel measurable function $F:\C\to \C$ (or $F:\sigma(A)\to \C$, $\C$ is the \textit{complex plane}), there corresponds a scalar type spectral operator
\begin{equation*}
F(A):=\int_\C F(\lambda)\,dE_A(\lambda)
=\int_{\sigma(A)} F(\lambda)\,dE_A(\lambda)
\end{equation*}
defined as follows:
$$
\begin{aligned}
F(A)f&:=\lim_{n\to\infty}F_n(A)f,\quad  f\in D(F(A)),\\
D(F(A))&:=\left\{f\in X
\big| \lim_{n\to\infty}F_n(A)f\ \text{exists}\right\}
\end{aligned}
$$
($D(\cdot)$ is the {\it domain} of an operator), where
\begin{equation*}
F_n(\cdot):=F(\cdot)\chi_{\{\lambda\in\sigma(A)\,|\,|F(\lambda)|\le n\}}(\cdot),
\quad n\in\N,
\end{equation*}
($\chi_\delta(\cdot)$ is the {\it characteristic function} of a set $\delta\subseteq \C$, $\N:=\left\{1,2,3,\dots\right\}$ is the set of \textit{natural numbers}) and
\begin{equation*}
F_n(A):=\int_{\sigma(A)} F_n(\lambda)\,dE_A(\lambda),\quad n\in\N,
\end{equation*}
are {\it bounded} scalar type spectral operators on $X$ defined in the same manner as for a \textit{normal operator} (see, e.g., \cite{Dun-SchII,Plesner}).

In particular,
\begin{equation}\label{A}
A^n=\int_{\C} \lambda^n\,dE_A(\lambda)
=\int_{\sigma(A)} \lambda^n\,dE_A(\lambda),\quad n\in\Z_+,
\end{equation}
($\Z_+:=\left\{0,1,2,\dots\right\}$ is the set of \textit{nonnegative integers}).

The properties of the {\it spectral measure}  $E_A(\cdot)$
and the {\it operational calculus}, exhaustively delineated in \cite{Survey58,Dun-SchIII}, underly the entire subsequent discourse. Here, we shall outline a few facts of particular importance.

Due to its {\it strong countable additivity}, the spectral measure $E_A(\cdot)$ is {\it bounded} \cite{Dun-SchI,Dun-SchIII}, i.e., there is such an $M>0$ that, for any Borel set $\delta\subseteq \C$,
\begin{equation}\label{bounded}
\|E_A(\delta)\|\le M.
\end{equation}

The notation $\|\cdot\|$ has been recycled here to designate the norm in the space $L(X)$ of all bounded linear operators on $X$. We shall adhere to this rather common economy of symbols in what follows adopting the same notation for the norm in the \textit{dual space} $X^*$ as well.


For any $f\in X$ and $g^*\in X^*$, the \textit{total variation} $v(f,g^*,\cdot)$ of the complex-valued Borel measure $\langle E_A(\cdot)f,g^* \rangle$
($\langle \cdot,\cdot \rangle$ is the {\it pairing} between the space $X$ and its dual $X^*$) is a {\it finite} positive Borel measure with
\begin{equation}\label{tv}
v(f,g^*,\C)=v(f,g^*,\sigma(A))\le 4M\|f\|\|g^*\|
\end{equation}
(see, e.g., \cite{Markin2004(2)}).
Also (Ibid.), $F:\C\to \C$ (or $F:\sigma(A)\to \C$) being an arbitrary Borel measurable function, for any $f\in D(F(A))$, $g^*\in X^*$, and an arbitrary Borel set $\sigma\subseteq \C$,
\begin{equation}\label{cond(ii)}
\int_\sigma|F(\lambda)|\,dv(f,g^*,\lambda)
\le 4M\|E_A(\sigma)F(A)f\|\|g^*\|.
\end{equation}
In particular,
\begin{equation}\label{cond(i)}
\int_{\C}|F(\lambda)|\,d v(f,g^*,\lambda)
=\int_{\sigma(A)}|F(\lambda)|\,d v(f,g^*,\lambda)\le 4M\|F(A)f\|\|g^*\|.
\end{equation}
The constant $M>0$ in \eqref{tv}--\eqref{cond(i)} is from
\eqref{bounded}.

The following statement allowing to characterize the domains of the Borel measurable functions of a scalar type spectral operator in terms of positive Borel measures is also fundamental for our discussion.

\begin{prop}
{\rm(}{\cite[Proposition $3.1$]{Markin2002(1)}}{\rm)}
\label{prop}\!.\
Let $A$ be a scalar type spectral operator in a complex Banach space $(X,\|\cdot\|)$ and $F:\C\to \C$
(or $F:\sigma(A)\to \C$) be Borel measurable function.
Then $f\in D(F(A))$ iff
\begin{enumerate}
\item[(i)] For any $g^*\in X^*$,
$\displaystyle \int_{\sigma(A)} |F(\lambda)|\,d v(f,g^*,\lambda)<\infty$.
\item[(ii)] $\displaystyle \sup_{\{g^*\in X^*\,|\,\|g^*\|=1\}}
\int_{\{\lambda\in\sigma(A)\,|\,|F(\lambda)|>n\}}
|F(\lambda)|\,dv(f,g^*,\lambda)\to 0 \quad \text{as}\quad n\to\infty$.
\end{enumerate}
\end{prop}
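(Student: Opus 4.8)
The plan is to prove both implications by passing between the norm of $X$ and the scalar total-variation integrals $v(f,g^*,\cdot)$ via duality, exploiting the fact from the operational calculus that every bounded Borel function $G$ integrates against the complex measure $\langle E_A(\cdot)f,g^*\rangle$ to give $\langle(\int G\,dE_A)f,g^*\rangle=\int G\,d\langle E_A(\cdot)f,g^*\rangle$, together with the elementary bound $\bigl|\int G\,d\langle E_A(\cdot)f,g^*\rangle\bigr|\le\int|G|\,dv(f,g^*,\cdot)$. Throughout I abbreviate $\delta_n:=\{\lambda\in\sigma(A)\mid|F(\lambda)|>n\}$ and $s_n:=\sup_{\|g^*\|=1}\int_{\delta_n}|F|\,dv(f,g^*,\cdot)$.

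For necessity, suppose $f\in D(F(A))$, so that $F(A)f=\lim_n F_n(A)f$ exists. Condition (i) is then immediate from \eqref{cond(i)}, which bounds $\int_{\sigma(A)}|F|\,dv(f,g^*,\cdot)$ by $4M\|F(A)f\|\|g^*\|<\infty$. For (ii) I apply \eqref{cond(ii)} with $\sigma=\delta_n$ and take the supremum over the unit ball of $X^*$, obtaining $s_n\le 4M\|E_A(\delta_n)F(A)f\|$. Since $F$ is complex- (hence finite-) valued, the sets $\delta_n$ decrease to $\emptyset$, so the strong countable additivity of $E_A(\cdot)$ forces $E_A(\delta_n)F(A)f\to 0$ in $X$; thus $s_n\to 0$, which is exactly (ii).

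For sufficiency, assume (i) and (ii); since $X$ is complete it suffices to show that $\{F_n(A)f\}$ is Cauchy. For $m>n$ one has $F_m-F_n=F\,\chi_{\{n<|F|\le m\}}$, so for any $g^*$, $\langle(F_m(A)-F_n(A))f,g^*\rangle=\int_{\{n<|F|\le m\}}F\,d\langle E_A(\cdot)f,g^*\rangle$, whence $|\langle(F_m(A)-F_n(A))f,g^*\rangle|\le\int_{\delta_n}|F|\,dv(f,g^*,\cdot)\le s_n\|g^*\|$. Taking the supremum over $\|g^*\|=1$ and invoking the Hahn--Banach theorem yields $\|F_m(A)f-F_n(A)f\|\le s_n$, and (ii) makes $s_n\to 0$. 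Hence the sequence converges and $f\in D(F(A))$.

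The step I expect to require the most care is the passage from the scalar estimates to the norm estimate in the sufficiency part: because $X$ is a general Banach space with no inner-product or reflexivity structure, the only route to controlling $\|F_m(A)f-F_n(A)f\|$ is through the duality $\|x\|=\sup_{\|g^*\|=1}|\langle x,g^*\rangle|$, and the uniformity in $g^*$ supplied by (ii) is precisely what converts the pointwise estimate into a usable bound. (One may note that, since $v(f,g^*,\sigma(A))$ is finite by \eqref{tv}, condition (ii) in fact already subsumes (i): splitting $\int_{\sigma(A)}|F|\,dv$ at the level $n$ bounds it, for $\|g^*\|=1$, by $n\,v(f,g^*,\sigma(A))+s_n$, which is finite for $n$ large.)
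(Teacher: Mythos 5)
Your proof is correct. Note, however, that the paper itself contains no proof of Proposition \ref{prop}: it is quoted verbatim from the earlier work \cite[Proposition 3.1]{Markin2002(1)}, so there is no internal argument to compare yours against; what can be checked is whether your argument is sound and consistent with the facts the paper does supply, and it is. Writing $\delta_n:=\{\lambda\in\sigma(A)\,|\,|F(\lambda)|>n\}$ and $s_n$ for the supremum in (ii), your necessity half follows exactly as you say: \eqref{cond(i)} gives (i), and \eqref{cond(ii)} with $\sigma=\delta_n$, combined with the strong continuity of the s.m.\ on the decreasing sets $\delta_n\downarrow\emptyset$ (here finiteness of $F$ matters, as you note), gives $s_n\le 4M\|E_A(\delta_n)F(A)f\|\to 0$; this is the same mechanism the paper itself uses in the proof of Theorem \ref{thm} (cf.\ the step leading to \eqref{small2}). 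The sufficiency half, which is the substantive direction, is also right: for $m>n$ the identity $F_m-F_n=F\chi_{\{n<|F|\le m\}}$ and linearity of the operational calculus on bounded Borel functions yield $\|F_m(A)f-F_n(A)f\|=\sup_{\|g^*\|=1}|\langle (F_m(A)-F_n(A))f,g^*\rangle|\le s_n$ by Hahn--Banach, so (ii) makes $\{F_n(A)f\}$ Cauchy and completeness of $X$ finishes; this duality route is essentially forced in a nonreflexive Banach space, and your isolation of it as the crux is apt. Two minor remarks: (a) your reliance on \eqref{cond(i)}--\eqref{cond(ii)} in the necessity direction is legitimate within this paper's framework, since those inequalities are stated as independent facts valid for every $f\in D(F(A))$ (they are not derived from the proposition here, so no circularity is incurred); (b) your closing observation that (ii) together with the finiteness of the total variation \eqref{tv} already subsumes (i) is correct and is a small sharpening of the quoted statement, though the proposition lists both conditions.
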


Subsequently, the frequent terms {\it "spectral measure"} and {\it "operational calculus"} will be abbreviated to {\it s.m.} and {\it o.c.}, respectively.

\subsection{The Carleman classes of vectors}\
Let $A$ be a densely defined closed linear operator in a complex Banach space $(X,\|\cdot\|)$ and $\left\{m_n\right\}_{n=0}^\infty$ be a sequence of positive numbers and
\begin{equation*}
C^{\infty}(A):=\bigcap_{n=0}^{\infty}D(A^n).
\end{equation*}
The subspaces of $C^{\infty}(A)$
$$
\begin{aligned}
C_{\{m_n\}}(A)&:=\left\{f\in C^{\infty}(A) \big |
\exists \alpha>0\ \exists c>0:
\|A^nf\| \le c\alpha^n m_n,\ n\in\Z_+ \right\},\\
C_{(m_n)}(A)&:=\left\{f \in C^{\infty}(A) \big | \forall \alpha > 0 \ \exists c>0:
\|A^nf\| \le c\alpha^n m_n,\ n\in\Z_+ \right\}
\end{aligned}
$$
are called the {\it Carleman classes} of ultradifferentiable vectors of the operator $A$ corres\-ponding to the sequence $\left\{m_n\right\}_{n=0}^\infty$
of {\it Roumieu} and {\it Beurling type}, respectively.

The inclusions
\begin{equation}\label{incl1}
C_{(m_n)}(A)\subseteq C_{\{m_n\}}(A)\subseteq C^\infty(A)\subseteq X
\end{equation}
are obvious.

If two sequences of positive numbers $\bigl\{m_n \bigr\}_{n=0}^\infty$ and $\bigl\{m'_n \bigr\}_{n=0}^\infty$ are related  as follows:
\begin{equation*}
\forall \gamma > 0 \ \exists c=c(\gamma)>0:\
m'_n \le c\gamma^n m_n,\quad  n\in\Z_+,
\end{equation*}
we also have the inclusion
\begin{equation}\label{incl2}
C_{\{m'_n\}}(A) \subseteq C_{(m_n)}(A),
\end{equation}
the sequences being subject to the condition
\begin{equation*}
\exists \gamma_1,\gamma_2 > 0, \ \exists c_1,c_2>0:\
c_1\gamma_1^n m_n\le m'_n \le c_2\gamma_2^n m_n,\quad n\in\Z_+,
\end{equation*}
their corresponding Carleman classes coincide
\begin{equation}\label{equal}
C_{\{m_n\}}(A)=C_{\{m'_n\}}(A),\quad  C_{(m_n)}(A)=C_{(m'_n)}(A).
\end{equation}

Considering {\it Stirling's formula} and the latter,
\begin{equation*}
\begin{aligned}
{\mathcal E}^{\{\beta\}}(A) & :=C_{\{[n!]^\beta\}}(A)
=C_{\{n^{\beta n}\}}(A),\\
{\mathcal E}^{(\beta)}(A) & :=C_{([n!]^\beta)}(A)
=C_{(n^{\beta n})}(A)
\end{aligned}
\end{equation*}
with $\beta\ge 0$ are the well-known \textit{Gevrey classes} of strongly ultradifferentiable vectors of $A$ of order $\beta$ of Roumieu and Beurling type, respectively (see, e.g., \cite{GorV83,book,Gor-Knyaz}). In particular,
${\mathcal E}^{\{1\}}(A)$ and ${\mathcal E}^{(1)}(A)$ are the well-known classes of {\it analytic} and {\it entire} vectors of $A$, respectively \cite{Goodman,Nelson}; ${\mathcal E}^{\{0\}}(A)$ and ${\mathcal E}^{(0)}(A)$ (i.e., the classes $C_{\{1\}}(A)$ and $C_{(1)}(A)$ corresponding to the sequence $m_n\equiv 1$) are the classes of \textit{entire} vectors of \textit{exponential} and \textit{minimal exponential type}, respectively (see, e.g., \cite{Radyno1983(1),Gor-Knyaz}).

If the sequence of positive numbers $\left\{m_n\right\}_{n=0}^\infty$ satisfies the condition
\begin{equation}\label{WGR}
\textbf{(WGR)}\ \forall \alpha>0\ \exists c=c(\alpha)>0:\
c\alpha^n \le m_n,\quad  n\in\Z_+,
\end{equation}
the scalar function
\begin{equation}\label{T}
T(\lambda):=m_0\sum_{n=0}^{\infty} \frac{\lambda^n}{m_n},\quad  \lambda\ge 0,\quad (0^0:=1)
\end{equation}
first introduced by S. Mandelbrojt \cite{Mandel}, is well-defined (cf. \cite{Gor-Knyaz}). The function is {\it continuous}, {\it strictly increasing}, and $T(0)=1$.

As is shown in \cite{GorV83} (see also \cite{Gor-Knyaz} and \cite{book}), the sequence $\left\{m_n\right\}_{n=0}^\infty$ satisfying the condition \textbf{(WGR)}, for a {\it normal operator} $A$ in a complex Hilbert space $X$, the equalities
\begin{equation}\label{CCeq}
\begin{aligned}
C_{\{m_n\}}(A) & =\bigcup_{t>0}D(T(t|A|)),\\
C_{(m_n)}(A) & =\bigcap_{t>0}D(T(t|A|))\\
\end{aligned}
\end{equation}
are true, the normal operators $T(t|A|)$, $t>0$, defined in the sense of the operational calculus for a normal operator (see, e.g., \cite{Dun-SchII,Plesner}) and the function $T(\cdot)$ being replaceable with any {\it nonnegative}, {\it continuous}, and {\it increasing} on $[0,\infty)$ function $F(\cdot)$ satisfying
\begin{equation}\label{replace}
c_1F(\gamma_1\lambda)\le T(\lambda)\le c_2F(\gamma_2\lambda),\quad \lambda\ge R,
\end{equation}
with some $\gamma_1,\gamma_2,c_1,c_2>0$ and $R\ge 0$, in particular, with
\begin{equation*}
S(\lambda):=m_0\sup_{n\ge 0}\dfrac{\lambda^n}{m_n},
\quad  \lambda\ge 0,
\quad \text{or}\quad
P(\lambda):=m_0\biggl[\sum_{n=0}^\infty\dfrac{\lambda^{2n}}{m_n^2}\biggr]^{1/2},
\quad  \lambda\ge 0,
\end{equation*}
(cf. \cite{Gor-Knyaz}).

In {\cite[Theorem 3.1]{Markin2004(2)}}, the above is generalized to the case of a \textit{scalar type spectral operator} $A$ in a \textit{reflexive} complex Banach space $X$. The reflexivity requirement dropped, proved were the inclusions
\begin{equation}\label{CCincl}
\begin{aligned}
C_{\{m_n\}}(A) & \supseteq \bigcup_{t>0}D(T(t|A|)),\\
C_{(m_n)}(A) & \supseteq \bigcap_{t>0}D(T(t|A|))\\
\end{aligned}
\end{equation}
only, which is a deficiency for statements like {\cite[Theorem $5.1$]{Markin2008}} and {\cite[Theorem $3.2$]{Markin2015(2)}}.

\section{The Carleman classes of a scalar type spectral operator}

\begin{thm}\label{thm}\
Let $\bigl\{m_n\bigr\}_{n=0}^\infty$ be a sequence of positive numbers satisfying the condition {\bf (WGR)} (see \eqref{WGR}). Then, for a scalar type spectral operator $A$ in a complex Banach space $(X,\|\cdot\|)$, equalities \eqref{CCeq} are true, the scalar type spectral operators $T(t|A|)$, $t>0$, defined in the sense of the operational calculus for a scalar type spectral operator and the function $T(\cdot)$ being replaceable with any {\it nonnegative}, {\it continuous}, and {\it increasing} on $[0,\infty)$ function $F(\cdot)$ satisfying \eqref{replace}.
\end{thm}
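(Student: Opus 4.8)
The plan is to prove the two inclusions reverse to \eqref{CCincl}, since the latter are already in hand; that is, $C_{\{m_n\}}(A)\subseteq\bigcup_{t>0}D(T(t|A|))$ and $C_{(m_n)}(A)\subseteq\bigcap_{t>0}D(T(t|A|))$. The engine throughout is Proposition~\ref{prop} applied to the Borel measurable function $\lambda\mapsto T(t|\lambda|)$ on $\sigma(A)$ (note $T(t|\lambda|)\ge 1>0$, so its modulus is itself), which recasts the condition $f\in D(T(t|A|))$ as the two requirements (i) and (ii) against the finite positive measures $v(f,g^*,\cdot)$ supplied by \eqref{tv}.

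First I would treat the Roumieu case. Let $f\in C_{\{m_n\}}(A)$, so that $\|A^nf\|\le c\alpha^n m_n$ for some $\alpha,c>0$ and all $n\in\Z_+$, and fix any $t\in(0,1/\alpha)$. To verify (i), expand $T(t|\lambda|)=m_0\sum_{k=0}^\infty(t|\lambda|)^k/m_k$, interchange summation and integration by the monotone convergence theorem (all terms being nonnegative), and estimate each moment through \eqref{cond(i)} by $\int_{\sigma(A)}|\lambda|^k\,dv(f,g^*,\lambda)\le 4M\|A^kf\|\|g^*\|\le 4Mc\alpha^k m_k\|g^*\|$. This yields the convergent geometric majorant $4Mcm_0\|g^*\|\sum_{k=0}^\infty(t\alpha)^k$, settling (i) with a bound linear in $\|g^*\|$.

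Condition (ii) is where the difficulty---and the point of dropping reflexivity---lies. Writing $\delta_n:=\{\lambda\in\sigma(A)\,|\,T(t|\lambda|)>n\}=\{\lambda\in\sigma(A)\,|\,|\lambda|>r_n\}$ with $r_n\uparrow\infty$, I would now invoke the sharper inequality \eqref{cond(ii)}, giving $\int_{\delta_n}|\lambda|^k\,dv\le 4M\|E_A(\delta_n)A^kf\|\|g^*\|$, so that after expanding $T$ and summing,
\[
\sup_{\{g^*\in X^*\,|\,\|g^*\|=1\}}\int_{\delta_n}T(t|\lambda|)\,dv(f,g^*,\lambda)\le 4Mm_0\sum_{k=0}^\infty\frac{t^k}{m_k}\,\|E_A(\delta_n)A^kf\|.
\]
The main obstacle is that the crude bound $\|E_A(\delta_n)A^kf\|\le M\|A^kf\|\le Mc\alpha^k m_k$ from \eqref{bounded} is independent of $n$ and only yields boundedness, not the required decay. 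I would overcome this by splitting the series at an index $N$: the tail $\sum_{k\ge N}(t\alpha)^k$ is small uniformly in $n$ via the geometric majorant, while for fixed $N$ the finite head tends to $0$ because $\delta_n\downarrow\emptyset$ and the strong countable additivity of $E_A(\cdot)$ forces $E_A(\delta_n)A^kf\to 0$ in the norm of $X$ for each fixed $k$. Since every bound here is uniform in $g^*$, this delivers (ii), hence $f\in D(T(t|A|))$ and the Roumieu inclusion.

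The Beurling inclusion is then immediate: for $f\in C_{(m_n)}(A)$ and any prescribed $t>0$, choose $\alpha<1/t$ together with the associated $c(\alpha)$, and rerun the above to get $f\in D(T(t|A|))$; arbitrariness of $t$ yields $f\in\bigcap_{t>0}D(T(t|A|))$. Finally, the replaceability of $T$ by a nonnegative, continuous, increasing $F$ satisfying \eqref{replace} need only be checked on the right-hand sides of \eqref{CCeq}, the Carleman classes themselves being independent of $T$ and $F$. Since $T$ and $F$ are bounded on bounded subsets of $[0,\infty)$, their domains are governed solely by growth at infinity, and the two-sided estimate \eqref{replace} translates---via Proposition~\ref{prop} once more---into the domain inclusions $D(F(\gamma_2 t|A|))\subseteq D(T(t|A|))\subseteq D(F(\gamma_1 t|A|))$; reparametrizing $t$ identifies the corresponding unions and intersections, completing the argument.
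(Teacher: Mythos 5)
Your proposal is correct and takes essentially the same approach as the paper: both verify conditions (i) and (ii) of Proposition \ref{prop} for $\lambda\mapsto T(t|\lambda|)$ with $t\alpha<1$ (the paper fixes $t=\tfrac{1}{2\alpha}$), expanding $T$ termwise via the Monotone Convergence Theorem, bounding the moments through \eqref{cond(i)} and \eqref{cond(ii)}, and handling (ii) by the identical head/tail split---the finite head driven to zero by the strong continuity of the spectral measure, the tail made uniformly small by the geometric majorant coming from \eqref{bounded}. The only cosmetic difference is that you sketch the replaceability of $T(\cdot)$ by $F(\cdot)$ directly, whereas the paper defers that part (together with the inclusions \eqref{CCincl}) to \cite{Markin2004(2)}.
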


\begin{proof}
We are only to prove the inclusions inverse to
\eqref{CCincl}, the rest, including the latter, having been proved in {\cite[Theorem 3.1]{Markin2004(2)}}.

Consider an arbitrary vector $f\in C_{\{m_n\}}(A)$ ($f\in C_{(m_n)}(A)$). Then necessarily, $f\in C^\infty(A)$  and
for a certain $\alpha>0$ (an arbitrary $\alpha>0$), there is a $c>0$ such that
\begin{equation}\label{CE}
\|A^nf\|\le c\alpha^n m_n,\quad n\in\Z_+,
\end{equation}
(see Preliminaries).

For any $g^*\in X^*$,
\begin{multline}\label{(i)}
\int_{\sigma(A)}
T\left(\tfrac{1}{2\alpha}|\lambda|\right)\,d v(f,g^*,\lambda)
=\int_{\sigma(A)} \sum_{n=0}^\infty\dfrac{|\lambda|^n}{2^n\alpha^n m_n}
\,d v(f,g^*,\lambda)
\\
\hfill
\text{by the {\it Monotone Convergence Theorem};}
\\
\shoveleft{
=\sum_{n=0}^\infty\int_{\sigma(A)}\dfrac{|\lambda|^n}{2^n\alpha^n m_n}
\,d v(f,g^*,\lambda)
=\sum_{n=0}^\infty\dfrac{1}{2^n\alpha^n m_n}
\int_{\sigma(A)}|\lambda|^n\,d v(f,g^*,\lambda)
}\\
\hfill
\text{by \eqref{cond(i)} and \eqref{A};}
\end{multline}
\begin{multline*}
\qquad\;\;\le \sum_{n=0}^\infty\dfrac{1}{2^n\alpha^n m_n}4M\|A^n f\|\|g^*\|
\\
\hfill
\text{by \eqref{CE};}
\\
\le 4Mc\sum_{n=0}^\infty \dfrac{1}{2^n}\|g^*\|=8Mc\|g^*\|<\infty.
\end{multline*}

For an arbitrary $\varepsilon>0$, one can fix an $N\in\N$ such that,
\begin{equation}\label{small1}
\dfrac{M^2c}{2^{N-2}}<\varepsilon/2.
\end{equation}

Due to the strong continuity of the {\it s.m.}, for any
$n\in\N$,
\begin{equation*}
\left\|E_A\left({\left\{\lambda\in\sigma(A)\big | T(\tfrac{1}{2\alpha}|\lambda|)>k\right\}}\right)A^n f\right\|
\to 0\quad \text{as}\quad  k\to\infty.
\end{equation*}
Hence, there is a $K\in\N$ such that
\begin{equation}\label{small2}
\sum_{n=0}^N\dfrac{1}{2^n\alpha^n m_n}4M\left\|E_A\left({\left\{\lambda\in\sigma(A)\big | T(\tfrac{1}{2\alpha}|\lambda|)>k\right\}}\right)A^n f\right\|
<\varepsilon/2
\end{equation}
whenever $k\ge K$.

Similarly to \eqref{(i)}, for $k\ge K$, we have
\begin{multline*}
\sup_{\{g^*\in X^*\,|\,\|g^*\|=1\}}\int_{\left\{\lambda\in\sigma(A)\big | T(\frac{1}{2\alpha}|\lambda|)>k\right\}}
T\left(\tfrac{1}{2\alpha}|\lambda|\right)\,d v(f,g^*,\lambda)
\\
\shoveleft{
=\sup_{\{g^*\in X^*\,|\,\|g^*\|=1\}}
\sum_{n=0}^\infty\dfrac{1}{2^n\alpha^n m_n}
\int_{\left\{\lambda\in\sigma(A)\big | T(\frac{1}{2\alpha}|\lambda|)>k\right\}}|\lambda|^n\,d v(f,g^*,\lambda)
}\\
\hfill
\text{by \eqref{cond(ii)} and \eqref{A};}
\\
\shoveleft{
\le \sup_{\{g^*\in X^*\,|\,\|g^*\|=1\}}
\sum_{n=0}^\infty\dfrac{1}{2^n\alpha^n m_n}4M\left\|E_A\left({\left\{\lambda\in\sigma(A)\big | T(\tfrac{1}{2\alpha}|\lambda|)>k\right\}}\right)A^n f\right\|\|g^*\|
}\\
\shoveleft{
\le \sup_{\{g^*\in X^*\,|\,\|g^*\|=1\}}\biggl[
\sum_{n=0}^N\dfrac{1}{2^n\alpha^n m_n}4M\left\|E_A\left({\left\{\lambda\in\sigma(A)\big | T(\tfrac{1}{2\alpha}|\lambda|)>k\right\}}\right)A^n f\right\|\|g^*\|
}\\
\shoveleft{
+\sum_{n=N+1}^\infty\dfrac{1}{2^n\alpha^n m_n}4M\left\|E_A\left({\left\{\lambda\in\sigma(A)\big | T(\tfrac{1}{2\alpha}|\lambda|)>k\right\}}\right)\right\|\left\|A^n f\right\|\|g^*\|\biggr]
}\\
\hfill
\text{by \eqref{bounded} and \eqref{CE};}
\\
\shoveleft{
\le \sup_{\{g^*\in X^*\,|\,\|g^*\|=1\}}\biggl[
\sum_{n=0}^N\dfrac{1}{2^n\alpha^n m_n}4M\left\|E_A\left({\left\{\lambda\in\sigma(A)\big |
T(\tfrac{1}{2\alpha}|\lambda|)>k\right\}}\right)A^n f\right\|\|g^*\|
}\\
+4M^2c\sum_{n=N+1}^\infty \dfrac{1}{2^n}\|g^*\|\biggr]
\\
\shoveleft{
\le \sum_{n=0}^N\dfrac{1}{2^n\alpha^n m_n}4M\left\|E_A\left({\left\{\lambda\in\sigma(A)\big |
T(\tfrac{1}{2\alpha}|\lambda|)>k\right\}}\right)A^n f\right\|
+\dfrac{M^2c}{2^{N-2}}
}\\
\hfill
\text{by \eqref{small1} and \eqref{small2};}
\\
\\
<\varepsilon/2+\varepsilon/2=\varepsilon
\end{multline*}
and we conclude that
\begin{equation}\label{(ii)}
\sup_{\{g^*\in X^*\,|\,\|g^*\|=1\}}\int_{\left\{\lambda\in\sigma(A)\big | T(\frac{1}{2\alpha}|\lambda|)>k\right\}}
T\left(\tfrac{1}{2\alpha}|\lambda|\right)\,d v(f,g^*,\lambda)
\to 0\quad \text{as}\quad k\to\infty.
\end{equation}

By Proposition \ref{prop}, \eqref{(i)} and \eqref{(ii)} imply
\begin{equation*}
f\in D(T(\tfrac{1}{2\alpha}|A|)).
\end{equation*}

Considering that for $f\in C_{\{m_n\}}(A)$, $\alpha>0$
is fixed and for $f\in C_{(m_n)}(A)$, $\alpha>0$ is arbitrary,
we infer that
\begin{equation*}
f\in \bigcup_{t>0}D(T(t|A|))
\end{equation*}
in the former case and
\begin{equation*}
f\in \bigcap_{t>0}D(T(t|A|))
\end{equation*}
in the latter.

Since $f\in C_{\{m_n\}}(A)$ ($f\in C_{(m_n)}(A)$) is arbitrary, we have proved the inclusions
\begin{equation*}
\begin{aligned}
C_{\{m_n\}}(A) & \subseteq \bigcup_{t>0}D(T(t|A|)),\\
C_{(m_n)}(A) & \subseteq \bigcap_{t>0}D(T(t|A|)),\\
\end{aligned}
\end{equation*}
which along with their inverses \eqref{CCincl} imply
equalities \eqref{CCeq} to be true.
\end{proof}

\section{The Gevrey classes}

The sequence $m_n:=[n!]^\beta$ ($m_n:=n^{\beta n}$) with $\beta>0$ satisfying the condition {\bf (WGR)} and the corresponding function $T(\cdot)$ being replaceable with $F(\lambda)=e^{\lambda^{1/\beta}}$, $\lambda\ge 0$, (see \cite{Markin2004(2)} for details, cf. also \cite{Gor-Knyaz}),
in {\cite[Corollary 4.1]{Markin2004(2)}} describing the Gevrey classes of vectors of a scalar type spectral operator in a \textit{reflexive} complex Banach space, the reflexivity requirement can be dropped as well and we have the following

\begin{cor}\label{cor}
Let $\beta>0$. Then, for a $A$ scalar type spectral operator in a complex Banach space $(X,\|\cdot\|)$,
\begin{equation*}
\begin{aligned}
{\mathcal E}^{\{\beta\}}(A) & =\bigcup_{t>0} D(e^{t|A|^{1/\beta}}),\\
{\mathcal E}^{(\beta)}(A) & =\bigcap_{t>0} D(e^{t|A|^{1/\beta}}).
\end{aligned}
\end{equation*}
\end{cor}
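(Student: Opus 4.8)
The plan is to deduce the Corollary as a direct specialization of Theorem~\ref{thm} to the Gevrey sequence $m_n:=[n!]^\beta$ (equivalently $m_n:=n^{\beta n}$, the two being interchangeable by Stirling's formula together with \eqref{equal}). The argument splits into three steps: verify that this sequence meets the condition \textbf{(WGR)}; exhibit $F(\lambda)=e^{\lambda^{1/\beta}}$ as an admissible replacement for the Mandelbrojt function $T(\cdot)$ of \eqref{T} in the sense of \eqref{replace}; and then reparametrize to pass from $F(t|A|)$ to $e^{t|A|^{1/\beta}}$.

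For the first step, \textbf{(WGR)} demands that, for each $\alpha>0$, there be a $c>0$ with $c\alpha^n\le m_n$ for every $n\in\Z_+$. Since $n^{\beta}\to\infty$ as $n\to\infty$ for $\beta>0$, the ratios $n^{\beta n}/\alpha^n$ are positive and tend to infinity, hence are bounded below by a positive constant; taking $c$ to be that bound settles the claim, and the case $m_n=[n!]^\beta$ follows likewise via Stirling. Thus the Gevrey sequence is admissible for Theorem~\ref{thm}. One also checks that $F(\lambda)=e^{\lambda^{1/\beta}}$ is nonnegative, continuous, and increasing on $[0,\infty)$, as required for the replacement.

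The second step is the analytic core, and it is where I expect the real obstacle to lie: one must establish the two-sided comparison \eqref{replace} between the entire function $T(\lambda)=\sum_{n=0}^\infty \lambda^n/[n!]^\beta$ and $F(\lambda)=e^{\lambda^{1/\beta}}$. This reduces to the fact that $T$ is of order $1/\beta$ and finite type, with maximal-term (Laplace/saddle-point) asymptotics matching those of $e^{\lambda^{1/\beta}}$ up to a dilation of the argument and a multiplicative constant. This comparison has already been carried out in \cite{Markin2004(2)}, so I would simply invoke it; absent that reference, I would derive the bounds in \eqref{replace} directly by estimating the power series near its largest term, the saddle point sitting at $n\approx\lambda^{1/\beta}$.

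Granting these facts, Theorem~\ref{thm} applies with $T$ replaced by $F$ and gives
$$
C_{\{[n!]^\beta\}}(A)=\bigcup_{t>0}D(F(t|A|)),\qquad
C_{([n!]^\beta)}(A)=\bigcap_{t>0}D(F(t|A|)).
$$
It remains only to recast the domains. Since $F(t|A|)=e^{(t|A|)^{1/\beta}}=e^{t^{1/\beta}|A|^{1/\beta}}$ and $t\mapsto t^{1/\beta}$ is a bijection of $(0,\infty)$ onto itself (as $\beta>0$), the substitution $s=t^{1/\beta}$ yields $\bigcup_{t>0}D(F(t|A|))=\bigcup_{s>0}D(e^{s|A|^{1/\beta}})$, and analogously for the intersection. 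In view of the definitions ${\mathcal E}^{\{\beta\}}(A)=C_{\{[n!]^\beta\}}(A)$ and ${\mathcal E}^{(\beta)}(A)=C_{([n!]^\beta)}(A)$, this is exactly the asserted description.
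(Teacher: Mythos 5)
Your proposal is correct and takes essentially the same approach as the paper: specialize Theorem~\ref{thm} to $m_n=[n!]^\beta$ (equivalently $n^{\beta n}$), check \textbf{(WGR)}, invoke the replaceability of $T(\cdot)$ by $F(\lambda)=e^{\lambda^{1/\beta}}$ in the sense of \eqref{replace} as established in \cite{Markin2004(2)}, and reparametrize $s=t^{1/\beta}$ to pass from $D(F(t|A|))$ to $D(e^{s|A|^{1/\beta}})$. The paper compresses exactly this argument into the paragraph preceding the corollary, likewise deferring the comparison \eqref{replace} to \cite{Markin2004(2)}; your version merely spells out the \textbf{(WGR)} verification and the substitution explicitly.
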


Corollary \ref{cor} generalizes the corresponding result of \cite{GorV83} (see also \cite{Gor-Knyaz,book}) for a {\it normal operator} $A$ in a complex Hilbert space and, for $\beta=1$, gives a description of the {\it analytic} and {\it entire} vectors of a scalar type spectral operator $A$ in a complex Banach space.

\section{The entire vectors of exponential type}

Observe that the sequence $m_n\equiv 1$ generating the entire vectors of exponential type does not meet the condition
\textbf{(WGR)} (see \eqref{WGR}) and thus, this case falls outside the realm of Theorem \ref{thm}.

As is known (cf., e.g., \cite{Gor-Knyaz,GorV-GorM1995}), for a \textit{normal operator} $A$ in a complex Hilbert space $X$,
\begin{equation*}
{\mathcal E}^{\{0\}}(A) = \bigcup_{\alpha>0}E_A(\Delta_\alpha)X
\end{equation*}
and
\begin{equation*}
{\mathcal E}^{(0)}(A)=\bigcap_{\alpha>0}E_A(\Delta_\alpha)X=E_A(\left\{0\right\})X=\ker A:=\left\{f\in X \big| Af=0\right\}
\end{equation*}
($E_A(\cdot)$ is the \textit{spectral measure} of $A$) with
\begin{equation*}
\Delta_\alpha:=\left\{\lambda\in\C\big | |\lambda|\le \alpha \right\},\quad \alpha>0.
\end{equation*}

We are to generalize the above to the case of a \textit{scalar type spectral operator} $A$ in a complex Banach space $X$.

\begin{thm}\label{exp}
For a scalar type spectral operator $A$ in a complex Banach space $(X,\|\cdot\|)$,
\begin{enumerate}
\item[(i)] $ {\mathcal E}^{\{0\}}(A) = \bigcup_{\alpha>0}E_A(\Delta_\alpha)X$,
\item[(ii)] $
{\mathcal E}^{(0)}(A)=\bigcap_{\alpha>0}E_A(\Delta_\alpha)X=E_A(\left\{0\right\})X=\ker A:=\left\{f\in X \big | Af=0\right\}
$,
\end{enumerate}
where $\Delta_\alpha:=\left\{\lambda\in\C\big | |\lambda|\le \alpha \right\}$, $\alpha>0$.
\end{thm}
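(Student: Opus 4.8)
The plan is to handle (i) and (ii) through a single localization device, converting the Carleman growth bound on $\|A^n f\|$ into a statement about where the spectral mass of $f$ sits, and then to settle the extra identifications in (ii) by the strong continuity of the s.m. and the o.c.

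For (i) I would prove the two inclusions separately. The inclusion $\bigcup_{\alpha>0}E_A(\Delta_\alpha)X \subseteq {\mathcal E}^{\{0\}}(A)$ is the routine one: if $f=E_A(\Delta_\alpha)f$, then $\lambda^n\chi_{\Delta_\alpha}(\lambda)$ is bounded by $\alpha^n$ on $\sigma(A)$, so $A^nf=\int_{\Delta_\alpha}\lambda^n\,dE_A(\lambda)f$ is a bounded operator applied to $f$ (in particular $f\in C^\infty(A)$); pairing with $g^*$ and invoking \eqref{tv} gives $\|A^nf\|\le 4M\|f\|\alpha^n$, so $f\in C_{\{1\}}(A)={\mathcal E}^{\{0\}}(A)$. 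For the reverse inclusion, take $f\in{\mathcal E}^{\{0\}}(A)$ with $\|A^nf\|\le c\alpha^n$, fix any $r>\alpha$, and set $\sigma_r:=\{\lambda\in\sigma(A):|\lambda|>r\}$. Applying \eqref{cond(ii)} with $F(\lambda)=\lambda^n$ and $\sigma=\sigma_r$, bounding the integrand below by $r^n$ on $\sigma_r$, and using \eqref{bounded} to pass from $\|E_A(\sigma_r)A^nf\|$ to $M\|A^nf\|$, I obtain
\[
r^n\,v(f,g^*,\sigma_r)\le\int_{\sigma_r}|\lambda|^n\,dv(f,g^*,\lambda)\le 4M\|E_A(\sigma_r)A^nf\|\|g^*\|\le 4M^2c\,\alpha^n\|g^*\|.
\]
Dividing by $r^n$ leaves $v(f,g^*,\sigma_r)\le 4M^2c\,\|g^*\|(\alpha/r)^n\to 0$, so $v(f,g^*,\sigma_r)=0$ for every $g^*$; in particular $\langle E_A(\sigma_r)f,g^*\rangle=0$ for all $g^*$, and Hahn--Banach yields $E_A(\sigma_r)f=0$, i.e. $f=E_A(\Delta_r)f\in E_A(\Delta_r)X$.

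For (ii), the central equality ${\mathcal E}^{(0)}(A)=\bigcap_{\alpha>0}E_A(\Delta_\alpha)X$ comes from rerunning this estimate while exploiting that the Beurling-type bound $\|A^nf\|\le c(\alpha')(\alpha')^n$ is now available for \emph{every} $\alpha'>0$: given an arbitrary $\alpha>0$, choosing $\alpha'<\alpha$ forces $v(f,g^*,\{|\lambda|>\alpha\})=0$ exactly as above, so $f\in E_A(\Delta_\alpha)X$ for all $\alpha$. The reverse inclusion is immediate, since a vector in $\bigcap_\alpha E_A(\Delta_\alpha)X$ lies in $\ker A$ and hence has $A^nf=0$ for $n\ge1$, trivially meeting the Beurling estimate for every $\alpha$. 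It then remains to establish the chain $\bigcap_{\alpha>0}E_A(\Delta_\alpha)X=E_A(\{0\})X=\ker A$. Because $\Delta_\alpha\downarrow\{0\}$ as $\alpha\downarrow0$, strong continuity of the s.m. gives $E_A(\Delta_\alpha)f\to E_A(\{0\})f$, identifying the intersection with $E_A(\{0\})X$; and $E_A(\{0\})X=\ker A$ follows because $AE_A(\{0\})=0$ in the o.c. supplies one inclusion, while for $f\in\ker A$ the bounded operator $\int_{\{|\lambda|\ge\varepsilon\}}\lambda^{-1}\,dE_A(\lambda)$ inverts $A$ on $E_A(\{|\lambda|\ge\varepsilon\})X$ and forces $E_A(\{|\lambda|\ge\varepsilon\})f=0$, whence $E_A(\{0\})f=f$ upon letting $\varepsilon\downarrow0$.

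The main obstacle is the reverse inclusion in (i) and its analogue in (ii): everything hinges on turning the \emph{upper} Carleman bound on $\|A^nf\|$ into vanishing of the spectral mass of $f$ outside $\Delta_r$. The crucial leverage is \eqref{cond(ii)}, which lets the lower bound $|\lambda|^n\ge r^n$ on $\sigma_r$ compete against the growth $\alpha^n$; the strict inequality $r>\alpha$ is precisely what manufactures the decaying factor $(\alpha/r)^n$ and kills the total variation. The one point worth verifying is that $c$ (respectively $c(\alpha')$) is independent of $n$, which holds by the very definition of the Carleman classes, so the estimate is genuinely uniform in $n$ and the passage $n\to\infty$ is legitimate.
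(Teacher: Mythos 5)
Your proposal is correct and follows essentially the same route as the paper: the easy inclusion via pairing with $g^*$ and the total variation bound \eqref{tv}, and the hard inclusion by playing the lower bound $r^n$ on $\{|\lambda|>r\}$ against the growth $c\alpha^n$ to kill $v(f,g^*,\cdot)$ outside a disk, then Hahn--Banach and the properties of the s.m. (your use of \eqref{cond(ii)} plus \eqref{bounded} in place of the paper's \eqref{cond(i)} is an immaterial variation). In fact you are slightly more complete than the paper's written proof, which establishes only ${\mathcal E}^{\{0\}}(A)=\bigcup_{\alpha>0}E_A(\Delta_\alpha)X$ and ${\mathcal E}^{(0)}(A)=\bigcap_{\alpha>0}E_A(\Delta_\alpha)X$ and leaves the identifications $\bigcap_{\alpha>0}E_A(\Delta_\alpha)X=E_A(\{0\})X=\ker A$ implicit, whereas you supply the strong-continuity and operational-calculus arguments for them explicitly.
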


\begin{proof}
Let $f\in \bigcup_{\alpha>0}E_A(\Delta_\alpha)X$ ($f\in \bigcap_{\alpha>0}E_A(\Delta_\alpha)X$), i.e.,
$$f=E_A\left(\bigl\{\lambda\in\C \bigm| |\lambda|\le \alpha\bigr\}\right)f$$ for some (any) $\alpha>0$. Then, by the properties of the \textit{o.c.},
\begin{equation*}
f\in C^\infty(A).
\end{equation*}

Furthermore, as follows form the {\it Hahn-Banach Theorem} and the properties of the \textit{o.c.} (in particular, \eqref{A}),
\begin{multline*}
\qquad\qquad\|A^nf\|=\biggl\|\int_{\C}\lambda^n\,dE_A(\lambda)f \biggr\|
=\biggl\|\int_{\{\lambda\in\C | |\lambda|\le \alpha\}}\lambda^n\,dE_A(\lambda)f \biggr\|
\\
\qquad\qquad\qquad\quad\shoveleft{
=\sup_{g^*\in X^*,\,\|g^*\|=1}\biggl|\biggl\langle \int_{\{\lambda\in\C | |\lambda|\le \alpha\}}\lambda^n
\,dE_A(\lambda)f,g^* \biggr\rangle\biggr| }
\\
\qquad\qquad\qquad\quad\shoveleft{
=\sup_{g^*\in X^*,\,\|g^*\|=1}\biggl|\int_{\{\lambda\in\C | |\lambda|\le \alpha\}}\lambda^n\,d\langle E_A(\lambda)f,g^*\rangle\biggr|
}\\
\qquad\qquad\qquad\quad\shoveleft{
\le \sup_{g^*\in X^*,\,\|g^*\|=1}\int_{\{\lambda\in\C | |\lambda|\le \alpha\}}|\lambda|^n\,dv(f,g^*,\lambda)
}\\
\qquad\qquad\qquad\quad\shoveleft{
\le \sup_{g^*\in X^*,\,\|g^*\|=1}\alpha^nv(f,g^*,\{\lambda\in\C | |\lambda|\le \alpha\})
}\\
\hfill \text{by \eqref{tv};}
\\
\qquad\qquad\qquad\,\shoveleft{
\le \sup_{g^*\in X^*,\,\|g^*\|=1}4M\|f\|\|g^*\|\alpha^n \le 4M\left[\|f\|+1\right]\alpha^n,\quad \alpha>0,
}
\end{multline*} 
which implies that $f\in {\mathcal E}^{\{0\}}(A)$
($f\in {\mathcal E}^{(0)}(A)$).

Conversely, for an arbitrary $f\in {\mathcal E}^{\{0\}}(A)$ ($f\in {\mathcal E}^{(0)}(A)$),
\begin{equation*}
\|A^nf\|\le c\alpha^n,\quad n\in\Z_+,
\end{equation*}
with some (any) $\alpha>0$ and some $c>0$.

Then, for any $\gamma>\alpha$ and $g^*\in X^*$, we have
\begin{multline*}
\gamma^n v(f,g^*,\{\lambda\in\C||\lambda|\ge\gamma\})
\le \int_{\{\lambda\in\C||\lambda|\ge\gamma\}}|\lambda|^n\,dv(f,g^*,\lambda)
\\
\shoveleft{
\le \int_{\C}|\lambda|^n\,dv(f,g^*,\lambda)
\hfill \text{by \eqref{cond(i)};}
}\\
\le 4M\|A^nf\|\|g^*\|\le 4Mc\|g^*\|\alpha^n,\quad n\in\Z_+.
\end{multline*}

Therefore,
\begin{equation*}
v(f,g^*,\{\lambda\in\C||\lambda|\ge\gamma\})
\le  4Mc\|g^*\|\biggl(\dfrac{\alpha}{\gamma}\biggr)^n,\quad n\in\Z_+.
\end{equation*}

Considering that $\alpha/\gamma<1$ and passing to the limit as $n\to\infty$, we conclude that
\begin{equation*}
v(f,g^*,\{\lambda\in\C||\lambda|\ge\gamma\})=0,\quad g^*\in X^*,
\end{equation*}
and the more so
\begin{equation*}
\langle E_A(\{\lambda\in\C||\lambda|\ge\gamma\})f,g^*\rangle=0,\quad g^*\in X^*.
\end{equation*}

Whence, as follows from the {\it Hahn-Banach Theorem},
\begin{equation*}
E_A(\{\lambda\in\C||\lambda|\ge\gamma\})f=0,
\end{equation*}
which, considering that $\gamma>\alpha$ is arbitrary, by the \textit{strong continuity} of the {\it s.m.}, implies that
\begin{equation*}
E_A(\{\lambda\in\C||\lambda|>\alpha\})f=0.
\end{equation*}

Hence, by the \textit{additivity} of the {\it s.m.},
\begin{equation*}
f=E_A(\{\lambda\in\C||\lambda|\le \alpha\})f+E_A(\{\lambda\in\C||\lambda|>\alpha\})f
=E_A(\{\lambda\in\C||\lambda|\le \alpha\})f,
\end{equation*}
which implies that $ f\in \bigcup_{\alpha>0}E_A(\Delta_\alpha)X$ ($ f\in \bigcap_{\alpha>0}E_A(\Delta_\alpha)X$).
\end{proof}

An immediate implication of Theorem \ref{exp} is the following generalization of the well-known result on the denseness of exponential type vectors of a normal operator in a complex Hilbert space (see, e.g., \cite{Gor-Knyaz}), which readily follows by the \textit{strong continuity} of the {\it s.m.} and joins a number of similar results of interest for approximation and qualitative theories (see \cite{Radyno1983(1),Gor-Knyaz,GorV-GorM1995,GorV-GorM2005,GorV-GorM2006}).

\begin{cor}
For a scalar-type spectral operator $A$ in a complex Banach space $(X,\|\cdot\|)$,
\begin{equation*}
\overline{{\mathcal E}^{\{0\}}(A)}=X
\end{equation*}
($\overline{\cdot}$ is the \textit{closure} of a set in the strong topology of $X$).
\end{cor}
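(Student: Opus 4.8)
The plan is to reduce the claim to Theorem \ref{exp}(i), which identifies $\mathcal{E}^{\{0\}}(A)$ with $\bigcup_{\alpha>0}E_A(\Delta_\alpha)X$, and then to produce, for an arbitrary $f\in X$, a sequence of exponential type vectors converging to $f$ in the strong topology. First I would fix an arbitrary $f\in X$ and consider the increasing sequence of closed disks $\Delta_n:=\{\lambda\in\C\,|\,|\lambda|\le n\}$, $n\in\N$, whose union is the entire complex plane $\C$. Setting $f_n:=E_A(\Delta_n)f$, each $f_n$ belongs to $E_A(\Delta_n)X\subseteq\bigcup_{\alpha>0}E_A(\Delta_\alpha)X=\mathcal{E}^{\{0\}}(A)$ by Theorem \ref{exp}(i), so the $f_n$ are exponential type vectors.

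The crux is the convergence $f_n\to f$. Since $\sigma(A)$ is the support of the spectral measure, we have $E_A(\C)=E_A(\sigma(A))=I$, and since $\Delta_n\uparrow\C$, the \emph{strong countable additivity} (strong continuity) of the s.m., recalled in the Preliminaries, yields
\begin{equation*}
E_A(\Delta_n)f\to E_A(\C)f=f\quad\text{as}\quad n\to\infty.
\end{equation*}
Thus $f=\lim_{n\to\infty}f_n$ with $f_n\in\mathcal{E}^{\{0\}}(A)$, whence $f\in\overline{\mathcal{E}^{\{0\}}(A)}$; as $f\in X$ was arbitrary, $\overline{\mathcal{E}^{\{0\}}(A)}=X$.

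Since the entire argument is driven by the strong continuity already in force for the s.m., there is no genuine obstacle here. The only point requiring a moment's care is the exhaustion $\Delta_n\uparrow\C$ coupled with the identity $E_A(\C)=I$, which is immediate from the fact that the support of $E_A(\cdot)$ is $\sigma(A)$. Everything else is a routine appeal to the strong continuity of the measure, exactly as the remark preceding the corollary anticipates.
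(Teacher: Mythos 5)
Your proof is correct and follows essentially the same route as the paper: the paper also derives the density directly from Theorem \ref{exp}(i) together with the strong continuity (strong countable additivity) of the spectral measure, merely without writing out the exhaustion $\Delta_n\uparrow\C$ explicitly. Your write-up simply supplies the details the paper leaves as "readily follows."
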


Hence, for any positive sequence $\left\{m_n\right\}_{n=0}^\infty$ satisfying the condition {\bf (WGR)}, in particular, for $m_n=[n!]^{\beta}$
with $\beta>0$, due to inclusion \eqref{incl2}
with $m_n'\equiv 1$,
\begin{equation*}
{\mathcal E}^{\{0\}}(A)\subseteq C_{(m_n)}(A),
\end{equation*}
which implies
\begin{equation*}
\overline{C_{(m_n)}(A)}=X.
\end{equation*}

\section{Final remarks}

Observe that, for a normal operator in a complex Hilbert,
equalities \eqref{CCeq} have not only the set-theoretic but also a topological meaning \cite{GorV83} (see also \cite{book,Gor-Knyaz}). By analogy, this also appears to be true for a scalar type spectral operator in a complex Banach space, although the idea was entertained by the author neither in \cite{Markin2004(2)} nor here.

For a normal operator in a complex Hilbert space, Theorems \ref{exp} and \ref{thm} can be considered as generalizations of \textit{Paley-Wiener Theorems} relating the smoothness of a square-integrable on the real axis $\R$ function $f(\cdot)$ to the decay of its \textit{Fourier transform} $\hat{f}(\cdot)$ as $x\to \pm\infty$ \cite{Paley-Wiener}, which precisely corresponds to the case of the \textit{self-adjoint} differential operator $A=i\dfrac{d}{dx}$ ($i$ is the {\it imaginary unit}) in the complex Hilbert space $L_2(\R)$ \cite{Gor-Knyaz}. Observe that, in $L_p(\R)$ with $1\le p<\infty$, $p\neq 2$, the same operator fails to be spectral \cite{Farwig-Marschall} (the domain of $A=i\dfrac{d}{dx}$ in $X=L_p(\R)$, $1\le p<\infty$, is understood to be the subspace
$\displaystyle
W_p^1(\R):=\left\{f\in L_p(\R)\big |f(\cdot)\
\text{is \textit{absolutely continuous} on $\R$ and}\ f'\in L_p(\R) \right\}$).

Theorem \ref{thm} entirely substantiates the proof of the \textit{"only if" part} of {\cite[Theorem $5.1$]{Markin2008}}, where inclusions \eqref{CCincl} turn out to be insufficient, and of {\cite[Theorem $3.2$]{Markin2015(2)}}. It appears to be fundamental for qualitative results of this nature (cf. \cite{Markin2009}).


{\it{Acknowledgments}}. The author's utmost appreciation is to Drs. Miroslav L. and Valentina I. Gorbachuk, whose work and life have become a perpetual source of inspiration, ideas, and goodness for him.


\end{document}

\bibitem{book}
{A. U. Thor},   
\textit{Book},  
{bookinfo}, 
{TBiMC},    
{Kyiv},     
{2004}.     

\bibitem{paper}
{A. U. Thor},   
\textit{paper}, 
{Theory of Probability} 
\textbf{70} 
{(2004)},   
{no.~4},    
{101--107}. 

\bibitem{inbook}
{A. U. Thor},   
\textit{paper}, 
{inbook},   
{bookinfo}, 
{vol.~120}, 
{2004},     
{pp.~206--215}. 

\fi

\bibitem{Survey58}
{N. Dunford},   
\textit{A survey of the theory of spectral operators}, 
{Bull. Amer. Math. Soc.}    
\textbf{64} 
{(1958)},   
{217--274}. 
\bibitem{Dun-SchI}
{N. Dunford and J. T. Schwartz with the assistance of W. G. Bade and R. G. Bartle},    
\textit{Linear Operators. {\rm{Part I\,:}} General Theory},  
{Interscience Publishers},  
{New York},     
{1958}.     
\bibitem{Dun-SchII}
{\bysame},  
\textit{Linear Operators. {\rm{Part II\,:}} Spectral Theory. Self Adjoint Operators in Hilbert Space}, 
{Interscience Publishers},  
{New York},     
{1963}.     
\bibitem{Dun-SchIII}
{\bysame},  
\textit{Linear Operators. {\rm{Part III\,:}} Spectral Operators}, 
{Interscience Publishers},  
{New York},     
{1971}.     
\bibitem{Farwig-Marschall}
{R. Farwig and E. Marschall},   
\textit{On the type of spectral operators and the nonspectrality of several differential operators on $L^p$},   
{Integral Equations Operator Theory}    
\textbf{4}  
{(1981)},   
{no.~2},    
{206--214}.   
\bibitem{Goodman}
{R. Goodman},   
\textit{Analytic and entire vectors for representations of Lie groups}, 
{Trans. Amer. Math. Soc.}   
\textbf{143}    
{(1969)},   
{55--76}.    
\bibitem{GorV-GorM1995}
{M. L. Gorbachuk and V. I. Gorbachuk},    
\textit{On the approximation of smooth vectors of a closed operator by entire vectors of exponential type},    
{Ukrain. Mat. Zh.}    
\textbf{47} 
{(1995)},   
{no.~5},    
{616--628. (Ukrainian); English transl.} 
{Ukrainian Math.~J.}    
\textbf{47} 
{(1995)},   
{no.~5},    
{713--726}. 


\bibitem{GorV-GorM2005}
{\bysame},  
\textit{On the well-posed solvability in some classes of entire functions of the Cauchy problem
for differential equations in a Banach space},  
{Methods Funct. Anal. Topology}   
\textbf{11} 
{(2005)},   
{no.~2},    
{113--125}. 
\bibitem{GorV-GorM2006}
{\bysame},  
\textit{On completeness of the set of root vectors for unbounded operators},    
{Methods Funct. Anal. Topology}   
\textbf{12} 
{(2006)},   
{no.~4},    
{353--362}. 
\bibitem{GorV83}
{V. I. Gorbachuk},   
\textit{Spaces of infinitely differentiable vectors of a nonnegative self-adjoint operator},    
{Ukrain. Mat. Zh.}    
\textbf{35} 
{(1983)},   
{no.~5},   
{617--621. (Russian); English transl.} 
{Ukrainian Math.~J.}    
\textbf{35} 
{(1983)},   
{no.~5},   
{531--534}. 


\bibitem{book}
{V. I. Gorbachuk and M. L. Gorbachuk},    
\textit{Boundary Value Problems for Operator Differential Equations},   
{Kluwer Academic Publishers}, 
{Dordrecht---Boston---London},        
{1991. (Russian edition: Naukova Dumka, Kiev, 1984)}     

\bibitem{Gor-Knyaz}
{V. I. Gorbachuk and A. V. Knyazyuk}, 
\textit{Boundary values of solutions of operator-differential equations}, 
{Russian Math. Surveys}   
\textbf{44} 
{(1989)},   
{no.~3},   
{67--111}.  


\bibitem{Mandel}
{S. Mandelbrojt},   
\textit{Series de Fourier et Classes Quasi-Analytiques de Fonctions},   
{Gauthier-Villars}, 
{Paris},        
{1935}.     
\bibitem{Markin2002(1)}
{M. V. Markin},  
\textit{On an abstract evolution equation with a spectral operator of scalar type}, 
{Int. J.~Math. Math. Sci.}  
\textbf{32} 
{(2002)},   
{no.~9},    
{555--563}. 


\bibitem{Markin2004(2)}
{\bysame},  
\textit{On the Carleman classes of vectors of a scalar type spectral operator}, 
{Ibid.} 
\textbf{2004}   
{(2004)},   
{no.~60},   
{3219--3235}.   
\bibitem{Markin2008}
{\bysame},  
\textit{On scalar type spectral operators and Carleman ultradifferentiable $C_0$-se\-migroups}, 
{Ukrain. Mat. Zh.}    
\textbf{60} 
{(2008)},   
{no.~9},    
{1215--1233; English transl.}   
{Ukrainian Math.~J.}    
\textbf{60} 
{(2008)},   
{no.~9},    
{1418--1436}.   


\bibitem{Markin2009}
{\bysame},  
\textit{On the Carleman ultradifferentiability of weak solutions of an abstract evolution equation},    
{Modern Analysis and Applications}, 
{Oper. Theory Adv. Appl.},  
{vol.~191}, 
{pp.~407--443}, 
{Birkh\"a\-user Verlag},    
{Basel},        
{2009}.
\bibitem{Markin2015(2)}
{\bysame},  
\textit{On the generation of Beurling type Carleman ultradifferentiable $C_0$-semigroups by scalar type spectral operators},    
{Methods Funct. Anal. Topology}   
{(to appear)}.  
\bibitem{Nelson}
{E. Nelson},    
\textit{Analytic vectors},  
{Ann. of Math. (2)} 
\textbf{70} 
{(1959)},   
{no.~3},    
{572--615}. 
\bibitem{Paley-Wiener}
{R. E. A. C. Paley and N. Wiener}, 
\textit{Fourier Transforms in the Complex Domain},  
{Amer. Math. Soc. Coll. Publ., vol.~19},    
{Amer. Math. Soc.}, 
{New York},     
{1934}.     



\bibitem{Plesner}
{A. I. Plesner}, 
\textit{Spectral Theory of Linear Operators},   
{Nauka},    
{Moscow},       
{1965}.      
{(Russian)}
\bibitem{Radyno1983(1)}
{Ya. V. Radyno}, 
\textit{The space of vectors of exponential type},  
{Dokl. Akad. Nauk BSSR} 
\textbf{27} 
{(1983)},   
{no.~9},    
{791--793}.    
{(Russian)}
\bibitem{Wermer}
{J. Wermer},    
\textit{Commuting spectral measures on Hilbert space},  
{Pacific J.~Math.}  
\textbf{4}  
{(1954)},   
{no.~3},    
{355--361}. 
\end{thebibliography}
\end{document}